\theoremstyle{definition}
\newtheorem{theorem}{Theorem}[section]
\newtheorem{definition}[theorem]{Definition}
\newtheorem{proposition}[theorem]{Proposition}
\newtheorem{lemma}[theorem]{Lemma}
\newtheorem{corollary}[theorem]{Corollary}
\newtheorem{example}[theorem]{Example}
\renewcommand{\Im}{\mathrm{Im}}
\DeclareMathOperator{\Perm}{\mbox{Perm}}
\DeclareMathOperator{\Hol}{\mbox{Hol}}
\DeclareMathOperator{\Aut}{\mbox{Aut}}
\DeclareMathOperator{\Stab}{\mbox{Stab}}
\DeclareMathOperator{\Map}{\mbox{Map}}
\newcommand{\calL}{\mathcal{L}}
\newcommand{\calR}{\mathcal{R}}
\newcommand{\id}{\mathrm{id}}
\begin{document}

\title[Skew bracoids containing a skew brace]{Skew bracoids containing a skew brace}

\author{Ilaria Colazzo}
\address{University of Exeter \\ Department of Mathematics and Statistics \\ Exeter \\ EX4 4QF \\ UK}
\curraddr{University of Leeds \\ School of Mathematics \\ Leeds \\ LS2 9JT \\ UK}
\email{I.Colazzo@leeds.ac.uk}

\author{Alan Koch}
\address{Department of Mathematics, Agnes Scott College, 141 E. College Ave., Decatur, GA 30030 USA}
\email{AKoch@agnesscott.edu}

\author{Isabel Martin-Lyons}
\address{School of Computer Science and Mathematics \\ Keele University \\ Staffordshire \\ ST5 5BG \\ UK}
\email{I.D.Martin-Lyons@keele.ac.uk}

\author{Paul J. Truman}
\address{School of Computer Science and Mathematics \\ Keele University \\ Staffordshire \\ ST5 5BG \\ UK}
\email{P.J.Truman@keele.ac.uk}

\subjclass[2020]{Primary 16T25; Secondary 20N99}

\keywords{Skew bracoids, skew braces, semibraces, Yang-Baxter equation}

\thanks{For the purpose of Open Access, the authors have applied a CC BY public copyright licence to any Author Accepted Manuscript (AAM) version arising from this submission.} 

\begin{abstract}
Skew bracoids have been shown to have applications in Hopf-Galois theory. We show that a certain family of skew bracoids correspond bijectively with left cancellative semibraces. A consequence of this correspondence is that skew bracoids in this family can be used to obtain and study solutions of the set-theoretic Yang--Baxter equation; we study this process and the resulting solutions. We give numerous examples of skew bracoids satisfying our hypothesis, drawing upon a variety of constructions in the literature.
\end{abstract}

\maketitle

\section{Introduction} \label{sec_introduction}

Skew braces, introduced by Guarnieri and Vendramin in \cite{GV17}, provide an algebraic framework for studying bijective nondegenerate solutions of the set-theoretic Yang--Baxter equation (see \cite{ESS99} or Section \ref{sec_solutions} for more details on the set-theoretic YBE). A skew brace is a triple $ (G,\star,\cdot) $ in which $ (G,\star) $ and $ (G,\cdot) $ are groups and the compatibility relation
\begin{equation} \label{eqn_skew_brace_relation}
x \cdot (y \star z) = (x \cdot y) \star x^{-\star} \star  (x \cdot z) 
\end{equation}
holds for all $ x,y,z \in G $ (here $ x^{-\star} $ denotes the inverse of $ x $ with respect to the binary operation $ \star $). Thanks to the importance of the Yang--Baxter equation in theoretical physics and statistical mechanics \cite{Jim89}, \cite{Jim94}, skew braces have been intensively studied, revealing connections with numerous other topics including braids, knots, regular subgroups of permutation groups, and Hopf-Galois theory. This has led to a fruitful network of ideas, which has motivated the development of numerous generalizations and variants of skew braces. For example, in \cite{CCS17} left \textit{semibraces} are introduced; these are triples $ (G,+,\cdot) $ in which $ (G,\cdot) $ is a group, $ (G,+) $ is a left cancellative semigroup, and the binary operations are connected via a relation similar to \eqref{eqn_skew_brace_relation} (see Section \ref{sec_semibraces} for more details). Left semibraces yield \textit{left nondegenerate} solutions of the set-theoretic Yang--Baxter equation. In \cite{MLT24} two of the authors introduce \textit{skew bracoids}; these consist of two groups $ (G,\cdot) $ and $ (N,\star) $, together with a transitive action of the former on the latter that interacts with the binary operation on $ N $ in a manner analogous to \eqref{eqn_skew_brace_relation}. Skew bracoids can be used to broaden the connection between skew braces and Hopf-Galois theory \cite[Section 5]{MLT24}. 

In this paper we show that a certain large family of skew bracoids correspond bijectively with semibraces (Theorem \ref{thm_bracoid_semibrace}). This correspondence enriches the study of both objects: it shows that many skew bracoids yield solutions of the set-theoretic Yang--Baxter equation, it connects semibraces with Hopf-Galois theory, and it shows that results concerning the classification or description of either object can be reinterpreted in terms of the other. 

In Section \ref{sec_skew_bracoids} we recall some fundamental definitions and results concerning skew bracoids, specify the family of skew bracoids we will study (Definition \ref{def_contains_skew_brace}), establish a number of equivalent characterizations (Proposition \ref{prop_characterisation}), and use these to give a variety of examples. We also exhibit a family of skew bracoids that do not satisfy our hypothesis. In Section \ref{sec_semibraces} we prove several technical lemmas concerning skew bracoids satisfying Definition \ref{def_contains_skew_brace}, and use these to establish our main result (Theorem \ref{thm_bracoid_semibrace}). 

As noted above, an interesting consequence of the correspondence established in Theorem \ref{thm_bracoid_semibrace} is that it allows us to obtain a solution of the set-theoretic Yang--Baxter equation from certain skew bracoids by passing through the corresponding semibrace. In Section \ref{sec_solutions} we show how this solution may be obtained directly from the skew bracoid, and study properties of this construction. 

\subsection*{Acknowledgements}
Ilaria Colazzo was partially supported by EPSRC (Engineering and Physical Sciences Research Council) project reference EP/V005995/1. Paul J. Truman was supported by EPSRC  project reference EP/W012154/1. The authors gratefully acknowledge the support of the London Mathematical Society, joint research group 32312, which enabled parts of this collaboration to take place.

We are grateful to the anonymous referee, whose comments and suggestions led to several improvements in the exposition. 

\section{Skew bracoids containing a brace} \label{sec_skew_bracoids}

A (left) \textit{skew bracoid} is a quintuple $ (G,\cdot,N,\star,\odot) $ in which $ (G,\cdot) $ and $ (N,\star) $ are groups and $ \odot $ is a transitive (left) action of $ (G,\cdot) $ on $ N $ such that the equation
\begin{equation} \label{eqn_skew_bracoid_relation}
x \odot (\eta \star \mu) = (x \odot \eta) \star (x \odot e_{N})^{-1} \star (x \odot \mu) 
\end{equation}
holds for all $ x \in G $ and $ \eta, \mu \in N $ (see \cite{MLT24}). We call $ (G,\cdot) $ the \textit{multiplicative group}, and $ (N,\star) $ the \textit{additive group}, of the skew bracoid $ (G,\cdot,N,\star,\odot) $. We suppress the notation for the binary operations in $ (G,\cdot) $ and $ (N,\star) $ whenever possible, and often suppress the symbol $ \odot $ when specifying skew bracoids, \textit{viz} $ (G,N) $. 

For brevity, we shall refer to skew braces simply as \textit{braces} and skew bracoids simply as \textit{bracoids}. 

In this section we specify and characterize a large family of bracoids; in subsquent sections we will connect bracoids in this family with semibraces and with set-theoretic solutions of the Yang--Baxter equation. 

We say that a bracoid $ (G,N) $ in which the subgroup $ S = \Stab(e_{N}) $ of $ G $ is trivial is \textit{essentially a brace}, since in this case the operation on one group can be transported to a new operation on the other, which then becomes a skew brace (see \cite[Example 2.2]{MLT24}). We shall study bracoids satisfying the following variant of this condition.

\begin{definition} \label{def_contains_skew_brace}
We shall say that a bracoid $ (G,N) $ \textit{contains a brace} if there is a subgroup $ H $ of $ G $ such that $ (H,N) $ is essentially a brace. 
\end{definition} 

To justify our choice of terminology we note that if $ (G,N) $ is a bracoid containing a brace $ (H,N) $ then we may define a second group operation on $ H $ via the rule
\begin{equation} \label{eqn_essentially_brace}
(x \star_{H} y) \odot e_{N} = (x \odot e_{N}) \star (y \odot e_{N})
\end{equation}
such that $ (H,\star_{H}) \cong (N,\star) $ and $ (H,\star_{H},\cdot) $ is a brace. Similarly, we may transport the transitive action of $ G $ on $ N $ to a transitive action of $ G $ on $ H $ by setting $ x \odot_{H} h $ to be the unique element of $ H $ such that
\begin{equation} \label{eqn_odot_H}
(x \odot_{H} h) \odot e_{N} = (xh) \odot e_{N};
\end{equation}
then $ (G,\cdot,H,\star_{H},\odot_{H}) $ is a bracoid isomorphic to $ (G,N) $ (see \cite[Section 4]{MLT24}), and it is certainly natural to say that this bracoid contains the brace $ (H,\star_{H},\cdot) $. Thus when considering bracoids containing a brace we will usually, and without loss of generality, assume $ N \subseteq G $ and denote this subgroup by $ H $.

Our first result gives a variety of characterizations of bracoids containing a brace. We summarise two constructions from the literature, which will be employed in the proof. First: a pair of groups $ H,S $ is said to be \textit{matched} if there is a left action of $ S $ on $ H $ and a right action of $ H $ on $ S $ such that  
\[ \,^{s}(h_{1}h_{2}) = \,^{s}h_{1} \,^{s^{h_{1}}}h_{2} \mbox{ and } 
(s_{1}s_{2})^{h} = s_{1}^{\,^{s_{2}}h} s_{2}^{h}; \]
in this case the Cartesian product $ H \times S $, together with the binary operation
\[ (h,s)(h',s') = (h \,^{s}h', s^{h'} s') \]
is a group, denoted by $ H \bowtie S $. Second: the \textit{holomorph} of a group $ H $ is the group $ \Hol(H) = H \rtimes \Aut(H) $, which acts on $ H $ via the formula $ (h,\alpha)(k) = h \alpha(k) $ for all $ h,k \in H $ and $ \alpha \in \Aut(H) $. If $ G $ is a transitive subgroup of $ \Hol(H) $ then $ (G,H) $ is a bracoid \cite[Theorem 2.8]{MLT24}. 

\begin{proposition} \label{prop_characterisation}
The following data are equivalent: 
\begin{enumerate}
\item a bracoid $ (G,H) $ containing a brace; \label{item_characterisation_1}
\item a bracoid $ (G,H) $ in which $ G=HS $ exactly, where $ S = \Stab_{G}(e) $; \label{item_characterisation_2}
\item a matched pair of groups $ H,S $ and a further binary operation $ \star $ on $ H $ such that $ (H,\star,\cdot) $ is a brace and $ S $ acts on $ (H,\star) $ by automorphisms; \label{item_characterisation_3}
\item a group $ (H,\star) $ and a transitive subgroup of $ \Hol(H,\star) $ containing a regular subgroup. \label{item_characterisation_4}
\end{enumerate}
\end{proposition}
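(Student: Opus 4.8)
The plan is to prove the equivalence as a single cycle $(1)\Rightarrow(2)\Rightarrow(3)\Rightarrow(4)\Rightarrow(1)$, working throughout in the normalization described before the statement: the additive group is identified with a subgroup $H\le G$ carrying both the restriction of the law of $G$, written $\cdot$, and the transported additive law $\star$. The transport arranged by \eqref{eqn_odot_H} satisfies $h\odot e=h$ for all $h\in H$. The workhorse is the observation that the bracoid relation \eqref{eqn_skew_bracoid_relation} rewrites each translation $x\odot(-)$ as $\eta\mapsto(x\odot e)\star\alpha_{x}(\eta)$, where $\alpha_{x}(\eta)=(x\odot e)^{-\star}\star(x\odot\eta)$ is a $\star$-automorphism of $H$; hence the orbit map $g\mapsto(g\odot e,\alpha_{g})$ is a homomorphism $G\to\Hol(H,\star)$ whose image acts on $H$ exactly as $G$ does.

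For $(1)\Rightarrow(2)$ I would argue that, $(H,N)$ being essentially a brace, the subgroup $H$ acts simply transitively; hence $\Stab_{H}(e)=H\cap S$ is trivial, and for any $g\in G$ transitivity of $H$ produces the unique $h\in H$ with $h\odot e=g\odot e$, forcing $h^{-1}g\in S$ and $g\in HS$. Thus $G=HS$ with $H\cap S=\{e\}$, i.e.\ an exact factorization. For $(2)\Rightarrow(3)$ I would use that an exact factorization $G=HS$ is precisely a matched pair: writing $sh={}^{s}h\,s^{h}$ with ${}^{s}h\in H$ and $s^{h}\in S$ defines the two actions and realizes $G\cong H\bowtie S$. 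The brace $(H,\star,\cdot)$ is supplied by \eqref{eqn_essentially_brace}. Finally each $s\in S$ fixes $e$, so $\alpha_{s}=s\odot(-)\in\Aut(H,\star)$; and since $s\odot h=(sh)\odot e=({}^{s}h\,s^{h})\odot e={}^{s}h$, the matched-pair action of $S$ on $H$ coincides with $\alpha_{s}$ and is therefore by $\star$-automorphisms.

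The crux is $(3)\Rightarrow(4)$. Given the matched pair and the brace, I would define an action of $G=H\bowtie S$ on $(H,\star)$ by $(h,s)\odot k=h\cdot{}^{s}k$; the matched-pair axioms make this a genuine action, and a short computation verifies \eqref{eqn_skew_bracoid_relation}, using the automorphism hypothesis to pass ${}^{s}(-)$ through $\star$ and the brace relation to expand $h\cdot(a\star b)$. Passing to $\Hol(H,\star)$ via the orbit map above exhibits $G$ (or, if $S$ does not act faithfully, its image) as a transitive subgroup of $\Hol(H,\star)$, and the image of the subgroup $\{(h,e):h\in H\}\cong(H,\cdot)$, which acts simply transitively, is a regular subgroup. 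For $(4)\Rightarrow(1)$ I would invoke \cite[Theorem 2.8]{MLT24} to see that a transitive subgroup $G\le\Hol(H,\star)$ makes $(G,H)$ a bracoid, and observe that a regular subgroup $R$ acts simply transitively, so $\Stab_{R}(e)$ is trivial, $(R,H)$ is essentially a brace, and $(G,H)$ contains a brace.

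I expect the main obstacle to be the verification in $(3)\Rightarrow(4)$: one must keep the two operations $\cdot$ and $\star$ on $H$ rigorously separate, apply the matched-pair identities for $\cdot$ while invoking the automorphism property for $\star$, and confirm that these interact correctly when checking \eqref{eqn_skew_bracoid_relation}. A secondary point to address is faithfulness: the homomorphism $G\to\Hol(H,\star)$ need not be injective, so the correspondence with (4) is most cleanly phrased by passing to the image. I would close by remarking that the four constructions are mutually inverse once one identifies a bracoid with the transitive subgroup of $\Hol(H,\star)$ that its action determines, which upgrades the cycle of implications to the asserted equivalence of data.
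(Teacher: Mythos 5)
Your proof is correct and takes essentially the same route as the paper's: the identical cycle $(1)\Rightarrow(2)\Rightarrow(3)\Rightarrow(4)\Rightarrow(1)$, with the same exact-factorization argument, the same identification $s\odot h={}^{s}h$ of the matched-pair action with the bracoid action, the same map $(h,s)\mapsto\bigl(k\mapsto h\,{}^{s}k\bigr)$ into $\Hol(H,\star)$, and the same appeal to \cite[Theorem 2.8]{MLT24} at the final step. The only (cosmetic) divergence is in $(3)\Rightarrow(4)$: the paper places the image of this map inside $\Hol(H,\star)$ directly, citing \cite[Theorem 4.2]{GV17} for the brace part and the automorphism hypothesis for the $S$-part, whereas you first verify the bracoid relation for the action $(h,s)\odot k=h\,{}^{s}k$ and then pass to the holomorph via the orbit map --- the same underlying computation, packaged differently.
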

\begin{proof}
First suppose that we are given \eqref{item_characterisation_1}, and let $ S = \Stab_{G}(e) $. Since $ (H,\cdot) $ acts regularly on $ H $, the stabilizer of $ e $ in $ H $ is trivial, so $ H \cap S $ is trivial. Given $ x \in G $ there exists a unique element $ h \in H $ such that $ x \odot e = h \odot e $, so $ s = h^{-1}x \in S $, and so $ x = hs $. Hence $ H $ is a complement to $ S $ in $ G $, and so we obtain \eqref{item_characterisation_2}. 
\\ \\
Next suppose that we are given \eqref{item_characterisation_2}. Then $ H,S $ are a matched pair of groups, and there is a binary operation $ \star $ on $ H $ such that $ (H,\star) $ is a group and 
\[ x \odot (h_{1} \star h_{2}) = (x \odot h_{1}) \star (x \odot e)^{-\star} \star (x \odot h_{2}) \mbox{ for all } x \in G \mbox{ and } h_{1},h_{2} \in H. \]
In particular, for all $ s \in S $ we have
\[ s \odot (h_{1} \star h_{2}) = (s \odot h_{1}) \star (s \odot h_{2}) \mbox{ for all } h_{1}, h_{2} \in H, \]
and so $ S $ acts on $ H $ by automorphisms. Hence we obtain \eqref{item_characterisation_3}. 
\\ \\
Next suppose that we are given \eqref{item_characterisation_3}. To ease notation we denote the left regular representation of $ \cdot $ by $ \lambda_{\bullet} $ rather than $ \lambda_{\cdot} $. Since $ (H,\star,\cdot) $ is a brace we have $ \lambda_{\bullet}(H) \subseteq \Hol(N,\star) $ \cite[Theorem 4.2.]{GV17}. Since in addition $ S $ acts on $ (H,\star) $ by automorphisms  the image of the map $ \theta : H \bowtie S \rightarrow \Perm(H) $ defined by $ \theta(h,s)[k] = h \,^{s}(k) $ for all $ h,k \in H $ and $ s \in S $ is also contained in $ \Hol(H,\star) $. Now note that
\[ \theta((h,s)(h',s'))[k] = \theta(h \,^{s}h', (s^{h'})s')[k] = h \,^{s}h' \,^{(s^{h'}s')}[k], \]
which agrees with
\[ \theta(h,s)\theta(h',s')[k] = \theta(h,s) \left( h' \,^{s'}[k] \right) = h \,^{s}\left( h' \,^{s'}[k] \right) \]
since $ H,S $ are a matched pair of groups. Hence $ \theta $ is a homomorphism and $ \Im(\theta) \leq \Hol(H,\star) $. Since $ \theta(h,e)[e] = h $ for all $ h \in H $, the image of $ H \bowtie \{ e \} $ is a regular subgroup of $ \Im(\theta) $, and so we obtain \eqref{item_characterisation_4}. 
\\ \\
Finally, suppose that we are given \eqref{item_characterisation_4}, and let $ G $ be a transitive subgroup of $ \Hol(H,\star) $ containing a regular subgroup. Then $ (G,H,\odot) $, where $ \odot $ denotes the natural action of $ G $ on $ H $, is a bracoid (see \cite[Theorem 2.8]{MLT24}). Since $ G $ contains a regular subgroup, this bracoid contains a brace, and so (relabelling if necessary) we obtain \eqref{item_characterisation_1}.
\end{proof}

In the remainder of this section we use Proposition \ref{prop_characterisation} to construct examples of bracoids containing a brace. 

A large family of bracoids arises from a certain quotienting procedure on braces. Recall that a \textit{strong left ideal} of a brace $ (G,\star,\cdot) $ is a subset $ S $ of $ G $ such that $ (S,\star) \mathrel{\unlhd} (G,\star) $ and $ \gamma_{x}(s) \in S $ for all $ x \in G $ and $ s \in S $, where $ \gamma : G \rightarrow \Aut(G,\star) $ is the homomorphism defined by
\begin{equation} \label{eqn_gamma_function}
\gamma_{x}(y) = x^{-\star} \star (x \cdot y) \mbox{ for all } x,y \in G. 
\end{equation}  
(these conditions imply that $ (S,\cdot) \leq (G,\cdot) $).  If $ S $ is a strong left ideal of the brace $ (G,\star,\cdot) $ then $ (G,\cdot) $ acts transitively on the group $ (G/S, \star) $ by left translation of cosets and $ (G,\cdot,G/S,\star,\odot) $ is a bracoid in which $ \Stab_{G}(eS)=S $ \cite[Proposition 2.4]{MLT24}. If in addition $ (S,\cdot) $ has a complement $ (H,\cdot) $ in $ (G,\cdot) $ then Proposition \ref{prop_characterisation} implies that the bracoid $ (G,\cdot,G/S,\star,\odot) $ contains a brace.

\begin{example} \label{ex_strong_left_ideal}
Suppose that $ H, S $ are groups and that there is a homomorphism $ \alpha : S \rightarrow \Aut(H) $. Then the set $ G=H \times S $, together with the operations
\[ (h,s) \star (h',s') = (hh', ss'), \mbox{ and } (h,s)\cdot(h',s') = (h \alpha_{s}(h'), ss') \]
forms a brace (see \cite[Example 1.6]{SV18}). It is straightforward to check that the $ \gamma $-function of this brace is $ \gamma_{(h,s)}(h',s') = (\alpha_{s}(h'),s') $, and it follows quickly that $ S'=\{ e \} \times S $ is a strong left ideal. Since $ \{ e \} \times S $ has a complement $ H'=H \times \{ e \} $ in $ (H \times S,\cdot) $, the bracoid $(G,G/S)$ contains a brace.
\end{example}

We note that in this example $ H \times \{ e \} $ is a \textit{normal} complement to $ \{ e \} \times S $ in $ (G,\cdot) $. Bracoids $ (G,N) $ such that $ \Stab(e_{N}) $ has a normal complement in $ (G,\cdot) $ have important applications in Hopf-Galois theory;  in the framework of \cite[Section 5]{MLT24} they correspond to Hopf-Galois structures on so-called \textit{almost classically Galois} field extensions, which occupy a distinguished place in the theory: see \cite[Section 4]{GP87}, \cite{Ko98}, \cite{By07}, for example. It is therefore interesting to note that the corresponding bracoids fall under the scope of Proposition \ref{prop_characterisation}. 

In \cite{Koc21a} it is shown that given a group $ (G,\cdot) $ and an endomorphism $ \psi $ of $ (G,\cdot) $ with abelian image (an \textit{abelian map}) we may define a new binary operation $ \star = \star_{\psi} $ on $ G $ such that $ (G,\star,\cdot) $ is a brace. The strong left ideals in braces of this form are characterized in \cite{KTpre}, as follows: writing $ \phi(x)=x \psi(x)^{-1} $ for all $ x \in G $, a subgroup $ (S,\cdot) $ of $ (G,\cdot) $ is a strong left ideal if and only if $ [G,\phi(S)] \leq S $ (where the commutators are computed with respect to $ \cdot $). If this is the case then we may apply  \cite[Proposition 2.4]{MLT24} and form the bracoid $ (G,G/S) $, as described above. It is possible for bracoids of this form to contain a brace:

\begin{example}
Let $ p $ and $ q $ be distinct odd prime numbers, and let 
\[ G = \langle x,y,z \mid x^{pq} = y^{2} = z^{2} = e, \; yxy^{-1} = zxz^{-1} = x^{-1}, \; yz=zy \rangle \cong C_{pq} \rtimes (C_{2} \times C_{2}). \]
The map $ \psi: G \rightarrow G $ defined by $ \psi(x^{i}y^{j}z^{k}) = y^{j}z^{k} $ is an abelian map, and we have $ \phi(x^{i}y^{j}z^{k}) = x^{i} $. Now let $ S = \langle x^{q}, z \rangle $; then $ \phi(S) = \langle x^{q} \rangle $ and $ [G,\phi(S)] \subseteq \langle x^{q} \rangle \subseteq S $, so we may form the bracoid $ (G,G/S) $. Finally, note that the subgroup $ S = \langle x^{q}, z \rangle $ has a complement $ H = \langle x^{p}, y \rangle $ in $ G $ (neither of $ H,S $ is normal in $ G $). Therefore by Proposition \ref{prop_characterisation} the bracoid $ (G,G/S) $ contains a brace
\end{example}

We can also construct examples of bracoids containing a brace that do not arise as the quotient of a brace by a strong left ideal as in Example \ref{ex_strong_left_ideal}. 

\begin{example}
Let $ N $ be an elementary abelian group of order $ 8 $. By \cite[Theorem 3.6]{By24} the holomorph of $ N $ contains a transitive subgroup $ J $ isomorphic to the simple group $ \mathrm{GL}_{3}(\mathbb{F}_{2}) $ of order $ 168 $. The resulting bracoid $ (J,N) $ does not arise as the quotient of a brace by a strong left ideal: see \cite[Example 2.22]{MLT24}. However, writing $ S = \Stab(e_{N}) $ as usual we have $ |S|=21 $ by the Orbit-Stabilizer Theorem, so taking $ H $ to be a Sylow $ 2 $-subgroup of $ J $ we have an exact factorization $ J = HS $, and so by Proposition \ref{prop_characterisation} the bracoid $ (J,N) $ contains a brace
\end{example}

We can also use Proposition \ref{prop_characterisation} to construct examples of bracoids that do not contain a brace. We seek a group $ N $ and a transitive subgroup $ J $ of $ \Hol(N) $ such that $ S = \Stab(e_{N}) $ does not have a complement in $ J $. Using results of Darlington \cite[Section 3.1]{Dar24} we have

\begin{example} \label{ex_minimal_transitive}
Let $ p $ and $ q $ be prime numbers with $ p \equiv 1 \pmod{q^{2}} $, and let $ N $ be a cyclic group of order $ pq $, presented as
\[ N = \langle \sigma, \tau \mid \sigma^{p}=\tau^{q}=1, \; \sigma\tau = \tau\sigma \rangle. \]
Let $ \alpha $ be an automorphism of $ N $ that has order $ q^{2} $ and fixes $ \tau $ (for example, let $ \alpha $ be the $ (p-1)/q^{2} $ power of a generator of $ \Aut(\langle \sigma \rangle) $), and let 
\[ J = \langle (\sigma, \id), (\tau, \alpha) \rangle \leq \Hol(N). \]
Then $ J $ is a transitive subgroup of $ \Hol(N) $ of order $ pq^{2} $, and $ (J,N) $ is a bracoid. 

We claim that $ S = \Stab(e_{N}) $ does not have a complement in $ J $. First we determine $ S $. Since $ \alpha $ fixes $ \tau $ we have $ (\tau, \alpha)^{i} = (\tau^{i}, \alpha^{i}) $ for each $ i $; it follows that $ S = \langle (e_{N},\alpha^{q}) \rangle $ and that $ |S| = q $. If $ H $ is a complement to $ S $ in $ J $ then in particular there exist $ h \in H $ and $ s \in S $ such that $ hs = (\tau, \alpha) $; hence $ (\tau,\alpha^{i}) \in H $ for some $ i \equiv 1 \pmod{q} $. An element of this form has order $ q^{2} $, but $ H $ must have order $ pq $, so this is impossible. Hence $ S $ does not have a complement in $ J $. 
\end{example}


\section{Connections with semibraces} \label{sec_semibraces}

In \cite{CCS17} the notion of a left \textit{semibrace} is introduced, and it is shown that these structures yield left nondegenerate solutions of the set-theoretic Yang--Baxter equation. In this section we obtain a correspondence between left bracoids containing a brace and left semibraces. One consequence of this correspondence is a procedure for obtaining solutions from bracoids containing a skew brace; we study this in more detail in Section \ref{sec_solutions}.

We begin by recalling some definitions and properties concerning semibraces. In \cite{CCS17} a left semibrace is defined\footnote{In \cite{JVA19}, Jespers and Van Antwerpen give a broader definition of semibraces; the left semibraces defined in \cite{CCS17} are what they call left cancellative left semibraces.} to be a triple $ (G,+,\cdot) $ in which $ (G,\cdot) $ is a group, $ (G,+) $ is a left cancellative semigroup, and the equation 
\begin{equation} \label{eqn_semibrace_relation}
x \cdot (y+z) = x \cdot y + x\cdot(x^{-1} + z)
\end{equation}
holds for all $ x,y,z \in G $. As when working with bracoids, we will suppress the notation $ \cdot $ where possible.

If $ (G,+,\cdot) $ is a left semibrace then for $ x,y \in G $ we define 
\[ \calL_{x}(y) = x(x^{-1} + y); \]
each $ \calL_{x} $ is an automorphism of the semigroup $ (G,+) $, and the map $ \calL : (G,\cdot) \rightarrow \Aut(G,+) $ is a homomorphism \cite[Proposition 3]{CCS17}. We may rewrite Equation \eqref{eqn_semibrace_relation} as
\[ x (y+z) = xy + \calL_{x}(z). \]
The semigroup $ (G,+) $ admits a decomposition $ (G+e,+) \oplus (E,+) $ (where $e$ is the identity of the group $(G,\cdot)$) in which $ (G+e,+) $ is a group and $ E $ denotes the set of idempotents of $ (G,+) $. It follows quickly from the semibrace relation \eqref{eqn_semibrace_relation} and the left cancellative property that $ e \in E $, that $ x \in E $ if and only if $ x+e = e $, and that if $ x \in E $ then $ x+y = y $ for all $ y \in G $.


Next we turn to bracoids containing a brace. In \cite[Definition 2.10]{MLT24} the $ \gamma $-function of a bracoid is defined, analogous to the $ \gamma $-function of a brace mentioned in \eqref{eqn_gamma_function}. In the case of a bracoid $ (G,H) $ containing a brace this is a homomorphism $ \gamma : G \rightarrow \Aut(H,\star) $ defined by
\[ \gamma_{x}(h) = (x \odot e)^{-\star} \star (x \odot h) \mbox{ for all } x \in G \mbox{ and } h \in H. \]
Using this we define a function $ \lambda : G \rightarrow \Map(G,H) $ by 
\begin{equation} \label{eqn_lambda_def}
\lambda_{x}(y) = \gamma_{x}(y \odot e) \mbox{ for all } x,y \in G  
\end{equation}
and a function $ \rho : G \rightarrow \Map(G,G) $ by
\begin{equation} \label{eqn_rho_def}
\rho_{y}(x) = \lambda_{x}(y)^{-1}xy \mbox{ for all } x,y \in G.
\end{equation}
Thus for each $ x \in G $ the function $ \lambda_{x} $ is closely related to $ \gamma_{x} $ but has domain equal to $ G $ rather than $ H $. We note an important consequence of \eqref{eqn_odot_H} is that if $ h \in H $ then $ h \odot e = h $ and so (for example) $ \lambda_{x}(h) = \gamma_{x}(h \odot e) = \gamma_{x}(h) $. We establish two technical lemmas concerning properties of the functions $ \lambda $ and $ \rho $. 

\begin{lemma} \label{lem_lambda_homomorphism}
For $ x,y \in G $ we have $ \lambda_{xy} = \lambda_{x} \lambda_{y} $. 
\end{lemma}
\begin{proof}
Let $ x,y,z \in G $. Then
\begin{align*}
\lambda_{x}\lambda_{y}(z) = \; & \lambda_{x} (\gamma_{y}(z \odot e))\\
= \;  & \gamma_{x} ( \gamma_{y}(z \odot e) \odot e )  \\
= \;  & \gamma_{x} \gamma_{y}(z \odot e) \tag{since $ \gamma_{y}(z \odot e) \in H $ } \\
= \;  & \gamma_{xy} (z \odot e) \tag{$ \gamma $ is a homomorphism of groups} \\
= \;  & \lambda_{xy}(z).
\end{align*}
\end{proof}

\begin{lemma} \label{lem_rho_antihomomorphism}
For $ x,y \in G $ we have $ \rho_{xy} = \rho_{y} \rho_{x} $.  
\end{lemma}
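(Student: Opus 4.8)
The plan is to evaluate both sides at an arbitrary $ z \in G $ and reduce the claim to a single identity among the $ \lambda $-maps. Writing $ w = \rho_{x}(z) = \lambda_{z}(x)^{-1}zx $, the definition \eqref{eqn_rho_def} gives $ \rho_{xy}(z) = \lambda_{z}(xy)^{-1}zxy $ and $ \rho_{y}\rho_{x}(z) = \lambda_{w}(y)^{-1}\lambda_{z}(x)^{-1}zxy $. Cancelling the common factor $ zxy $ on the right, the lemma becomes equivalent to the identity
\[ \lambda_{z}(xy) = \lambda_{z}(x)\,\lambda_{w}(y), \]
where the product is taken in $ (G,\cdot) $ (and lies in $ H $, since each factor does).

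First I would expand the left-hand side using the bracoid relation \eqref{eqn_skew_bracoid_relation}. Since $ (xy)\odot e = x\odot(y\odot e) $, and since the definition of $ \gamma $ rearranges to $ x\odot\mu = (x\odot e)\star\gamma_{x}(\mu) $, applying the $ \star $-automorphism $ \gamma_{z} $ and using that $ \gamma $ is a homomorphism (so $ \gamma_{z}\gamma_{x} = \gamma_{zx} $) yields
\[ \lambda_{z}(xy) = \gamma_{z}(x\odot e)\star\gamma_{zx}(y\odot e) = \lambda_{z}(x)\star\lambda_{zx}(y), \]
where I invoke \eqref{eqn_lambda_def} to identify $ \gamma_{z}(x\odot e)=\lambda_{z}(x) $ and $ \gamma_{zx}(y\odot e)=\lambda_{zx}(y) $.

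Next I would rewrite $ \lambda_{w}(y) $. Because $ \gamma $ is a homomorphism and $ w = \lambda_{z}(x)^{-1}zx $, we have $ \gamma_{w} = \gamma_{\lambda_{z}(x)}^{-1}\gamma_{zx} $, hence $ \lambda_{w}(y) = \gamma_{w}(y\odot e) = \gamma_{\lambda_{z}(x)}^{-1}(\lambda_{zx}(y)) $. Setting $ a=\lambda_{z}(x)\in H $ and $ b=\lambda_{zx}(y)\in H $, the target identity becomes $ a\star b = a\cdot\gamma_{a}^{-1}(b) $. This is exactly the translation between $ \star $ and $ \cdot $ supplied by the brace: specialising the definition \eqref{eqn_gamma_function} of the $ \gamma $-function of $ (H,\star,\cdot) $ gives $ a\cdot h = a\star\gamma_{a}(h) $ for all $ a,h\in H $, and taking $ h=\gamma_{a}^{-1}(b) $ closes the argument. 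Here I use that for $ a,h\in H $ the restricted action satisfies $ a\odot h = ah $ by \eqref{eqn_odot_H}, so the bracoid and brace $ \gamma $-functions agree on $ H $.

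The main obstacle is the bookkeeping between the two operations $ \star $ and $ \cdot $ on $ H $: the expansion of $ \lambda_{z}(xy) $ naturally produces a $ \star $-product, whereas the composition $ \rho_{y}\rho_{x} $ produces a $ \cdot $-product, and reconciling them requires the precise conversion furnished by $ \gamma $. Care is also needed to distinguish the domain of $ \gamma_{x} $ (namely $ H $) from that of $ \lambda_{x} $ (namely $ G $), and to recall that the product $ \lambda_{z}(x)\lambda_{w}(y) $ and the inverses appearing in \eqref{eqn_rho_def} are computed in $ (G,\cdot) $, restricting to $ (H,\cdot) $. Once these conversions are made explicit, the computation is routine and essentially forced by Lemma \ref{lem_lambda_homomorphism} together with the homomorphism property of $ \gamma $.
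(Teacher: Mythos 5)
Your proof is correct and takes essentially the same route as the paper's: both reduce the claim (after cancelling the common factor $zxy$) to the identity $\lambda_{z}(xy) = \lambda_{z}(x)\,\lambda_{\rho_{x}(z)}(y)$, and both verify it using the homomorphism property of $\gamma$ together with the conversion between $\star$ and $\cdot$ on $H$. The only cosmetic difference is that you close the argument via the brace identity $a \cdot h = a \star \gamma_{a}(h)$ on $(H,\star,\cdot)$, whereas the paper massages both sides into the common form $h \odot (h^{-1} \star \lambda_{zx}(y))$ using the bracoid relation \eqref{eqn_skew_bracoid_relation} --- the same fact in different packaging.
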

\begin{proof}
Let $ x,y,z \in G $. Then we have
\begin{equation} \label{eqn_rho_1}
\rho_{xy}(z) = \lambda_{z}(xy)^{-1} zxy,
\end{equation} 
whereas
\begin{eqnarray}
\rho_{y} \rho_{x}(z) & = & \rho_{y} \left( \lambda_{z}(x)^{-1} zx \right) \nonumber \\
& = & \lambda_{\lambda_{z}(x)^{-1}zx}(y)^{-1} \lambda_{z}(x)^{-1} zxy. \label{eqn_rho_2}
\end{eqnarray}
Using Lemma \ref{lem_lambda_homomorphism}, we see that \eqref{eqn_rho_1} and \eqref{eqn_rho_2} agree if and only if
\begin{equation} \label{eqn_rho_sufficient}
\lambda_{\lambda_{z}(x)^{-1}} \lambda_{zx}(y) = \lambda_{z}(x)^{-1} \lambda_{z}(xy).
\end{equation}
To simplify notation, let $ h = \lambda_{z}(x)^{-1} \in H $; then the left hand side of \eqref{eqn_rho_sufficient} is equal to
\begin{align}
\lambda_{h} (\lambda_{zx}(y) \odot e) = \; & \gamma_{h}(\lambda_{zx}(y)) \tag{since $ \lambda_{zx}(y) \in H $} \nonumber \\
= \; & (h \odot e)^{-\star} \star (h \odot \lambda_{zx}(y)) \nonumber \\
= \; & h \odot ((h^{-1} \odot e) \star \lambda_{zx}(y)) \tag{ by \eqref{eqn_skew_bracoid_relation}}  \nonumber \\
= \; & h \odot (h^{-1} \star \lambda_{zx}(y)), \label{eqn_rho_sufficient_LHS}
\end{align}
and the right hand side of \eqref{eqn_rho_sufficient} is equal to
\begin{align}
h \lambda_{z}(xy) = \; & h \odot \gamma_{z}(xy \odot e) \tag{ since $ h \in H $ } \nonumber \\
= \; & h \odot \gamma_{z}((x \odot e) \star \gamma_{x}(y \odot e))  \nonumber \\
= \; & h \odot (\gamma_{z}(x \odot e) \star \gamma_{zx}(y \odot e)) \nonumber \\
= \; & h \odot (h^{-1} \star \lambda_{zx}(y)) \label{eqn_rho_sufficient_RHS}
\end{align}
Since \eqref{eqn_rho_sufficient_RHS} and \eqref{eqn_rho_sufficient_LHS} agree, \eqref{eqn_rho_sufficient} holds, which completes the proof. 
\end{proof}

Finally, we record two corollaries of Lemma \ref{lem_rho_antihomomorphism}. 

\begin{corollary} \label{cor_rho_bijective}
For $ x \in G $, the function $ \rho_{x} $ is bijective.
\end{corollary}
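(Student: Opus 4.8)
The plan is to exploit the antihomomorphism property established in Lemma \ref{lem_rho_antihomomorphism}, namely $ \rho_{xy} = \rho_{y}\rho_{x} $, which reduces the claim to identifying the image of the identity under $ \rho $. First I would compute $ \rho_{e} $, where $ e $ denotes the common identity of $ (G,\cdot) $ and $ (H,\star) $. Directly from \eqref{eqn_rho_def} we have $ \rho_{e}(x) = \lambda_{x}(e)^{-1}xe = \lambda_{x}(e)^{-1}x $ for all $ x \in G $. To evaluate $ \lambda_{x}(e) $, recall from \eqref{eqn_lambda_def} that $ \lambda_{x}(e) = \gamma_{x}(e \odot e) $; since the identity of $ G $ acts trivially we have $ e \odot e = e $, and since $ \gamma_{x} $ is an automorphism of $ (H,\star) $ it fixes the identity, so $ \gamma_{x}(e) = e $. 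Hence $ \lambda_{x}(e) = e $ and $ \rho_{e}(x) = x $, that is, $ \rho_{e} = \id_{G} $.

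With this in hand, the result follows formally. Applying Lemma \ref{lem_rho_antihomomorphism} to the pairs $ (x,x^{-1}) $ and $ (x^{-1},x) $ gives
\[ \rho_{x}\rho_{x^{-1}} = \rho_{x^{-1}x} = \rho_{e} = \id_{G} \quad\text{and}\quad \rho_{x^{-1}}\rho_{x} = \rho_{xx^{-1}} = \rho_{e} = \id_{G}, \]
so $ \rho_{x^{-1}} $ is a two-sided inverse for $ \rho_{x} $ under composition. Therefore $ \rho_{x} $ is a bijection of $ G $, with inverse $ \rho_{x^{-1}} $.

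The only step carrying genuine content is the computation $ \rho_{e} = \id_{G} $; everything else is a purely formal consequence of the antihomomorphism property. The point to be careful about is the evaluation of $ \lambda_{x}(e) $, which relies on the triviality of the action of $ e \in G $ together with the fact that each $ \gamma_{x} $ is a group automorphism of $ (H,\star) $ and hence fixes the additive identity. I do not anticipate any real obstacle here.
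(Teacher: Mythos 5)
Your proof is correct and follows essentially the same route as the paper: compute $\rho_{e} = \id$ by observing $\lambda_{x}(e) = \gamma_{x}(e \odot e) = \gamma_{x}(e) = e$ (since each $\gamma_{x} \in \Aut(H,\star)$), then invoke Lemma \ref{lem_rho_antihomomorphism} to conclude that $\rho_{x^{-1}}$ is a two-sided compositional inverse of $\rho_{x}$. Your write-up is slightly more explicit than the paper's (spelling out that $e \odot e = e$ and treating the two orderings of the antihomomorphism separately), but there is no substantive difference.
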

\begin{proof}
First note that for all $ y \in G $ we have $ \lambda_{y}(e) = \gamma_{y}(e) = e $ since $ \gamma_{y} \in \Aut(H,\star) $; hence for all $ y \in G $ we have $ \rho_{e}(y) = \lambda_{y}(e)^{-1}ye = y $ and so $ \rho_{e} = \mathrm{id} $. Now by Lemma \ref{lem_rho_antihomomorphism}, for all $ x \in G $ we have 
\[ \rho_{x}\rho_{x^{-1}} = \rho_{x^{-1}}\rho_{x} = \rho_{e} = \mathrm{id}, \]
so $ \rho_{x} $ is bijective with inverse $ \rho_{x^{-1}} $. 
\end{proof}

\begin{corollary} \label{cor_lambda_useful}
For $ x,y,z \in G $ we have
\[ \lambda_{x}(yz) = \lambda_{x}(y) \lambda_{\rho_{y}(x)}(z). \]
\end{corollary}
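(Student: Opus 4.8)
The plan is to prove the stated identity---in which all products are taken in $(G,\cdot)$, so that $\lambda_x(y)\lambda_{\rho_y(x)}(z)$ is the product in $(G,\cdot)$ of two elements of $H$---by first establishing its additive analogue and then converting the operation $\star$ into $\cdot$ by means of the brace relation. (Note that the $\star$-version of the statement, with $\lambda_{\rho_y(x)}(z)$ in place of $\lambda_{xy}(z)$, would be false, so the multiplicative reading is forced.)

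First I would record the transport identity $(uv) \odot e = (u \odot e) \star \lambda_u(v)$, valid for all $u,v \in G$: writing $(uv)\odot e = u \odot (v \odot e)$ and applying the definition of $\gamma_u$ to the element $v \odot e \in H$ gives $u \odot (v \odot e) = (u \odot e) \star \gamma_u(v \odot e)$, and $\gamma_u(v\odot e) = \lambda_u(v)$ by \eqref{eqn_lambda_def}. Applying $\gamma_x$ to this identity with $u=y$, $v=z$, and using that $\gamma_x \in \Aut(H,\star)$ distributes over $\star$, I obtain $\lambda_x(yz) = \gamma_x(y\odot e) \star \gamma_x(\lambda_y(z)) = \lambda_x(y) \star \lambda_x(\lambda_y(z))$, where the last equality uses $\lambda_y(z) \in H$ (so that $\gamma_x$ and $\lambda_x$ agree on it). Lemma \ref{lem_lambda_homomorphism} then collapses $\lambda_x(\lambda_y(z)) = \lambda_{xy}(z)$, yielding the additive cocycle identity $\lambda_x(yz) = \lambda_x(y) \star \lambda_{xy}(z)$.

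It then remains to rewrite the right-hand side multiplicatively. The definition \eqref{eqn_rho_def} of $\rho$ gives the factorization $xy = \lambda_x(y)\,\rho_y(x)$ in $(G,\cdot)$. Using that $\lambda_{\rho_y(x)}(z) \in H$ together with Lemma \ref{lem_lambda_homomorphism} once more, I would compute $\gamma_{\lambda_x(y)}(\lambda_{\rho_y(x)}(z)) = \lambda_{\lambda_x(y)}(\lambda_{\rho_y(x)}(z)) = \lambda_{\lambda_x(y)\rho_y(x)}(z) = \lambda_{xy}(z)$. Finally, the brace relation for $(H,\star,\cdot)$, which reads $a \cdot c = a \star \gamma_a(c)$ for $a,c \in H$, taken with $a = \lambda_x(y)$ and $c = \lambda_{\rho_y(x)}(z)$, converts the additive identity into the desired one: $\lambda_x(y)\cdot\lambda_{\rho_y(x)}(z) = \lambda_x(y)\star\gamma_{\lambda_x(y)}(\lambda_{\rho_y(x)}(z)) = \lambda_x(y)\star\lambda_{xy}(z) = \lambda_x(yz)$.

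The main obstacle is purely bookkeeping between the two operations: one must apply the simplifications $h\odot e = h$ and $\gamma_w = \lambda_w$ on $H$ only to bona fide elements of $H$, and, crucially, recognise that the factor $\lambda_{xy}(z)$ produced by the additive computation must be reorganised through the factorization $xy = \lambda_x(y)\rho_y(x)$ so that $\rho_y(x)$---rather than $xy$---appears. No genuinely hard estimate is involved; the content is just the interplay of Lemma \ref{lem_lambda_homomorphism}, the definition of $\rho$, and the brace relation.
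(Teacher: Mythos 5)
Your proof is correct, but it takes a genuinely different route from the paper's. The paper's proof is a four-line computation entirely inside $(G,\cdot)$: it writes $xyz$ in two ways using the factorization $uw = \lambda_{u}(w)\rho_{w}(u)$ coming from \eqref{eqn_rho_def}, invokes Lemma \ref{lem_rho_antihomomorphism} to identify $\rho_{z}(\rho_{y}(x))$ with $\rho_{yz}(x)$, and then cancels the common right factor in the group $G$. You never touch Lemma \ref{lem_rho_antihomomorphism}; instead you first derive the additive cocycle identity $\lambda_{x}(yz) = \lambda_{x}(y)\star\lambda_{xy}(z)$ from the bracoid relation and Lemma \ref{lem_lambda_homomorphism}, and then convert $\star$ into $\cdot$ via the brace relation $a\cdot c = a\star\gamma_{a}(c)$ on $H$ together with the rewriting $\lambda_{xy}(z) = \gamma_{\lambda_{x}(y)}\bigl(\lambda_{\rho_{y}(x)}(z)\bigr)$. (The one step you leave implicit --- that the bracoid's $\gamma$-function restricted to $H$ agrees with the brace's, equivalently that $a\odot c = a\cdot c$ for $a,c\in H$ --- follows at once from $h\odot e = h$ and the action axiom, so the omission is harmless.) What the paper's route buys is brevity: the corollary drops out of the lemma it is attached to. What your route buys is independence and extra information: your argument uses only Lemma \ref{lem_lambda_homomorphism} and the definitions, so the statement could be placed immediately after that lemma; it isolates the $\star$-cocycle property of $\lambda$, which is of independent interest; and since the two factorizations of $xyz$ show that Corollary \ref{cor_lambda_useful} and Lemma \ref{lem_rho_antihomomorphism} are equivalent by cancellation in $G$, your proof in fact yields an alternative derivation of Lemma \ref{lem_rho_antihomomorphism} as well.
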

\begin{proof}
By repeated application of \eqref{eqn_rho_def} we have
\begin{align}
\lambda_{x}(yz) \rho_{yz}(x) = \; & xyz \nonumber \\
= \; & \lambda_{x}(y) \rho_{y}(x) z \nonumber \\
= \; & \lambda_{x}(y) \lambda_{\rho_{y}(x)}(z) \rho_{z}(\rho_{y}(x)) \nonumber \\
= \; &\lambda_{x}(y) \lambda_{\rho_{y}(x)}(z) \rho_{yz}(x)  \tag{ by Lemma \ref{lem_rho_antihomomorphism}}; \nonumber
\end{align}
the result follows immediately. 
\end{proof}

With these results to hand, we can state and prove our main result.

\begin{theorem} \label{thm_bracoid_semibrace}
Let $ G = (G,\cdot) $ be a group with an exact factorization $ HS $. There is a bijection between
\begin{enumerate}
\item binary operations $ \star $ on $ H $ and transitive actions $ \odot $ of $ (G,\cdot) $ on $ H $ such that $ (G,\cdot,H,\star,\odot) $ is a left bracoid containing a brace and with $ \Stab_{G}(e)=S $;
\item binary operations $ + $ on $ G $ such that $ (G,+,\cdot) $ is a left semibrace in which $ G+e = H $ and $ E=S $.
\end{enumerate}
\end{theorem}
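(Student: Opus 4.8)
The plan is to write down mutually inverse maps between the two sets of data. I fix the group $(G,\cdot)$ with its exact factorisation $G=HS$ and, on the bracoid side, record the following facts about a bracoid $(G,\cdot,H,\star,\odot)$ containing a brace with $\Stab_{G}(e)=S$: the map $g\mapsto g\odot e$ is the orbit map and sends $g=hs$ to its $H$-component $h$; one has $h\odot k=(hk)\odot e=hk$ for $h,k\in H$; and each $s\in S$ acts on $(H,\star)$ as a $\star$-automorphism $h\mapsto {}^{s}h=s\odot h$ (Proposition \ref{prop_characterisation}).

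From such a bracoid I would build $+$ by decreeing that $\star$ be the additive operation on $H=G+e$, that the elements of $S$ be additive idempotents acting as left identities, and that $x+y$ depend on $x$ only through $x\odot e$. Concretely, writing the $SH$-factorisation $y=t\,k$ with $t\in S$ and $k\in H$, I set
\[ x+y \;:=\; t\cdot\bigl(t^{-1}\odot\bigl((x\odot e)\star(y\odot e)\bigr)\bigr). \]
A short computation gives $x+e=x\odot e$, so $G+e=H$; that $s+y=y$ for $s\in S$, so $S\subseteq E$; and that $x\in E\iff x\odot e=e\iff x\in S$, so $E=S$. Conversely, from a left semibrace with $G+e=H$ and $E=S$ I would set $\star:=+|_{H\times H}$ (a group operation on $H$ by the decomposition recalled before the theorem) and define $x\odot h:=(xh)+e$. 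Then $\odot$ is an action: since $\calL_{x}(e)=x(x^{-1}+e)$ is an additive idempotent we have $\calL_{x}(e)+e=e$, which kills the only obstruction to $(x_{1}x_{2})\odot h=x_{1}\odot(x_{2}\odot h)$; transitivity holds because the orbit of $e$ is $\{x\odot e\}=G+e=H$, and $\Stab_{G}(e)=\{x:x+e=e\}=E=S$.

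The substantive work is to verify the two defining relations: that the first recipe satisfies the semibrace relation \eqref{eqn_semibrace_relation}, and that the action built in the second satisfies the bracoid relation \eqref{eqn_skew_bracoid_relation}. I expect \eqref{eqn_semibrace_relation} to be the main obstacle. I would check it by splitting both sides into their $H$- and $S$-components under the $HS$-factorisation. The $H$-components agree by the brace relation \eqref{eqn_skew_brace_relation} together with $\gamma_{x}\in\Aut(H,\star)$, much as in the proof of Proposition \ref{prop_characterisation}. The $S$-components are the delicate part: their agreement encodes the interaction between the right action of $H$ on $S$ and the left action of $S$ on $H$ furnished by the matched pair, and this is exactly what is packaged by the homomorphism and anti-homomorphism properties of $\lambda$ and $\rho$ (Lemmas \ref{lem_lambda_homomorphism} and \ref{lem_rho_antihomomorphism}), the bijectivity of $\rho_{x}$ (Corollary \ref{cor_rho_bijective}), and the distributivity identity $\lambda_{x}(yz)=\lambda_{x}(y)\lambda_{\rho_{y}(x)}(z)$ of Corollary \ref{cor_lambda_useful}. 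With both relations in hand, checking that the maps are mutually inverse is routine: $(xh)+e=x\odot h$ and $+|_{H}=\star$ recover $(\star,\odot)$ from the associated $+$, while the uniqueness built into the first recipe recovers $+$ from $(\star,\odot)$.
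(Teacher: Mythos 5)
Your formulas are, in themselves, sound, although they differ from the paper's by a twist worth flagging: the paper takes $\star$ to be the \emph{opposite} of $+$ restricted to $H$ (there $h \star k = k + h$, and the forward map is $x + y = y\lambda_{y^{-1}}(x)$), whereas you take $\star = +|_{H \times H}$. This is not fatal, because the bracoid axiom \eqref{eqn_skew_bracoid_relation} holds for $(G,\cdot,H,\star,\odot)$ if and only if it holds for $(G,\cdot,H,\star^{\mathrm{op}},\odot)$ (swap $\eta$ and $\mu$), so passing to the opposite operation is an involution on the family (1); one can check that your formula $x + y = t\cdot\bigl(t^{-1}\odot\bigl((x\odot e)\star(y\odot e)\bigr)\bigr)$, with $y = tk$, is exactly the paper's formula applied to the opposite bracoid, and hence your maps, once verified, would give a bijection just as the paper's do.

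The genuine gap is that the substantive verifications are declared rather than performed, and some are not declared at all. To land in family (2) you must show that $+$ is \emph{associative}, that $(G,+)$ is \emph{left cancellative}, and that the semibrace relation \eqref{eqn_semibrace_relation} holds. Your proposal never mentions associativity or left cancellativity; in the paper these consume a substantial portion of the proof and are precisely where Lemma \ref{lem_lambda_homomorphism}, Corollary \ref{cor_rho_bijective} and Corollary \ref{cor_lambda_useful} are used. For \eqref{eqn_semibrace_relation} you offer only a strategy (splitting into $H$- and $S$-components and appealing to Lemmas \ref{lem_lambda_homomorphism} and \ref{lem_rho_antihomomorphism}) with no derivation; note that the paper does not argue componentwise but verifies the relation in a single chain of identities driven by Corollary \ref{cor_lambda_useful}, and it is not evident that a componentwise check closes cleanly, since the relation mixes $\cdot$ and $+$ in a way that entangles the two components. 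Finally, the inverse check in the direction semibrace $\to$ bracoid $\to$ semibrace is not routine: with your formulas it amounts to proving that the multiplicative $S$-part $t$ of $y = tk$ coincides with the additive idempotent component of $y$, i.e.\ that $y = (y+e) + t$, and this requires several applications of \eqref{eqn_semibrace_relation} together with the idempotent identities (the paper's corresponding computation is a ten-line display). As written, then, you have a correct plan with correct formulas, but the proof of the theorem, namely that the forward recipe produces a semibrace and that the two recipes are mutually inverse, is missing.
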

\begin{proof}
First suppose that $ (G,\cdot,H,\star,\odot) $ is a left bracoid containing a brace and with $ \Stab(e)=S $. For $ x,y \in G $ define 
\[ x + y = y \lambda_{y^{-1}}(x). \]
We claim that $ (G,+,\cdot) $ is a left semibrace. First we show that $ + $ is associative on $ G $. For $ x,y,z \in G $ we have
\begin{align*}
x + (y + z) = \; & x + z\lambda_{z^{-1}}(y) \\
 = \; & z\lambda_{z^{-1}}(y)\lambda_{(z\lambda_{z^{-1}}(y))^{-1}}(x) \\
 = \; & z\lambda_{z^{-1}}(y)\lambda_{\rho_{y}(z^{-1})y^{-1}}(x) \tag{by \eqref{eqn_rho_def}},
\end{align*}
and
\begin{align*}
(x+y) + z  = \; & y\lambda_{y^{-1}}(x) + z \\
 = \; & z\lambda_{z^{-1}}(y\lambda_{y^{-1}}(x)) \\
 = \; & z\lambda_{z^{-1}}(y)\lambda_{\rho_{y}(z^{-1})}(\lambda_{y^{-1}}(x)) \tag{by Corollary \ref{cor_lambda_useful} } \\
 = \; & z\lambda_{z^{-1}}(y)\lambda_{\rho_{y}(z^{-1})y^{-1}}(x) \tag{ by Lemma \ref{lem_lambda_homomorphism}}.
\end{align*}
Thus $ + $ is associative, and so $ (G,+) $ is a semigroup. 

Next we show that $ (G,+) $ is left cancellative. For $ x,y,z \in G $ we have
\begin{align*}
& x + y = x + z \\
\Rightarrow \; & y\lambda_{y^{-1}}(x) = z \lambda_{z^{-1}}(x) \\ 
\Rightarrow \; & x\rho_{x}(y^{-1})^{-1} = x\rho_{x}(z^{-1})^{-1} \\
\Rightarrow \; & \rho_{x}(y^{-1}) = \rho_{x}(z^{-1}) \\
\Rightarrow \; & y = z \tag{ by Corollary \ref{cor_rho_bijective}}.
\end{align*}
Thus $ (G,+) $ is left cancellative. 

Finally we show that the left semibrace relation is satisfied. For $ x,y,z \in G $ we have 
\begin{align*}
xy + x(x^{-1}+z) = \; & xy + xz\lambda_{z^{-1}}(x^{-1}) \\
 = \; & xz\lambda_{z^{-1}}(x^{-1}) \lambda_{(xz\lambda_{z^{-1}}(x^{-1}))^{-1}}(xy) \\
 = \; & xz\lambda_{z^{-1}}(x^{-1})\lambda_{\rho_{x^{-1}}(z^{-1})}(xy) \\
 = \; & xz\lambda_{z^{-1}}(x^{-1}xy) \tag{by Corollary \ref{cor_lambda_useful}} \\
 = \; & xz\lambda_{z^{-1}}(y) \\
 = \; & x(y+z). 
\end{align*}
Thus $ (G,+,\cdot) $ is indeed a left semibrace. 

For the remaining claims, we note that for $ x \in G $ we have $ x+e = e\lambda_{e^{-1}}(x) = \lambda_{e}(x) $; thus $ G+e = \lambda_{e}(G) = H $. Finally, we have
\begin{align*}
& x + x = x \\
\Leftrightarrow \; & x\lambda_{x^{-1}}(x) = x \\
\Leftrightarrow \; & \lambda_{x^{-1}}(x) = e \\
\Leftrightarrow \; & \lambda_{x^{-1}}(x) \odot e = e \tag{note $ \lambda_{x^{-1}}(x) \in H $} \\
\Leftrightarrow \; & \gamma_{x^{-1}}(x \odot e) = e \tag{relationship between $ \lambda $ and $ \gamma $} \\
\Leftrightarrow \; & x \odot e = e \tag{$\gamma_{x} \in \Aut(H,\star) $} \\
\Leftrightarrow \; & x \in S.  
\end{align*}
Thus $ E=S $. 

Conversely, suppose that $ + $ is a binary operation on $ G $ such that $ (G,+,\cdot) $ is a left semibrace in which $ G+e = H $ and $ E=S $. For $ h,k \in H $ define
\[ h \star k = k + h; \]
then $ (H,\star) $ is the opposite group to $ (H,+) $. In addition, for $ x \in G $ and $ h \in H $ define $ x \odot h = xh + e $. We claim that $ (G,\cdot,H,\star,\odot) $ is a bracoid containing a brace such that $ \Stab(e)=S $. It is straightforward to verify that $ \odot $ is a transitive left action of $ (G,\cdot) $ on $ H $; it remains to show that the bracoid relation \eqref{eqn_skew_bracoid_relation} is satisfied. By the definition of the binary operation $ \star $, it suffices to show that
\begin{equation} \label{eqn_semibrace_to_bracoid_sufficient}
x \odot (h + k) = (x \odot h) - (x \odot e) + (x \odot k) 
\end{equation}
for all $ x \in G $ and $ h,k \in H $. We shall use repeatedly the fact that since $ e \in E $ we have $ e+y=y $ for all $ y \in G $, along with the semibrace relation \eqref{eqn_semibrace_relation}. Beginning with the right hand side of \eqref{eqn_semibrace_to_bracoid_sufficient} we have:
\begin{eqnarray*}
(x \odot h) - (x \odot e) + (x \odot k)  & = & (x \odot h) - (x \odot e) + (xk + e) \\
& = & (x \odot h) - (x \odot e) + (x(e+k) + e) \\
& = & (x \odot h) - (x \odot e) + xe + \calL_{x}(k) + e \\
& = & (x \odot h) - (x \odot e) + xe + e + \calL_{x}(k) + e \\
& = & (x \odot h) - (x \odot e) + (x \odot e) + \calL_{x}(k) + e \\
& = & xh + \calL_{x}(k) + e \\
& = & x(h+k)+e \\
& = & x \odot (h+k). 
\end{eqnarray*}
Hence \eqref{eqn_semibrace_to_bracoid_sufficient} holds, and so $ (G,\cdot,H,\star,\odot) $ is indeed a left bracoid.

For the remaining claims: first we note that $ x \in S $ if and only if $ x + e = e $, which occurs if and only if $ x \in E $; second, for $ h \in H $ we have $ h \odot e = h + e = h $, so the action of $ H $ on itself is transitive, and so $ (H,+,\cdot) $ is a brace. 

The constructions described above are mutually inverse: if $ (G,\cdot,H,\star,\odot) $ is a bracoid as in the statement of the theorem and $ (G,+,\cdot) $ is the corresponding semibrace, then the bracoid corresponding to $ (G,+,\cdot) $ is $ (G,\cdot,H,\hat{\star}, \hat{\odot}) $, where for all $ h,k \in H $ and $ x \in G $ we have $ h\; \hat{\star} \; k = k + h $ and $ x \; \hat{\odot} \; h = xh + e $. Since $ k + h = h \star k $, we quickly see that $ \hat{\star} $ coincides with $ \star $. Since $ \hat{\odot} $ and $ \odot $ are both transitive actions of $ (G,\cdot) $ on $ H $, they coincide if and only if $ x \; \hat{\odot} \; e = x \odot e $ for all $ x \in G $. Writing $ x=hs $, with $ h \in H $ and $ s \in S $, we have: 
\[ x \;\hat{\odot}\; e = \; hs \;\hat{\odot}\; e = h + e = hs \odot e = x \odot e. \]
Thus the bracoid corresponding to $ (G,+,\cdot) $ is the original bracoid $ (G,\cdot,H,\star,\odot) $.

Conversely, if $ (G,+,\cdot) $ is a semibrace as in the statement of the theorem and $ (G,\cdot,H,\star,\odot) $ is the corresponding bracoid, then the semibrace corresponding to $ (G,\cdot,H,\star,\odot) $ is $ (G,\hat{+},\cdot) $, where $ x \;\hat{+}\; y = y\lambda_{y^{-1}}(x) $. We have
\begin{align*}
x \;\hat{+}\; y = \; & y\lambda_{y^{-1}}(x) \\
= \; & y\gamma_{y^{-1}}(x \odot e) \tag{by \eqref{eqn_lambda_def}} \\
 = \; & y((y^{-1}\odot e)^{-1} \star (y^{-1}\odot(x\odot e)) \tag{by the definition of $ \gamma_{y^{-1}} $}  \\
 = \; & y(y^{-1} \odot ((y \odot e)\star(x \odot e))) \tag{by \eqref{eqn_skew_bracoid_relation}} \\
 = \; & y(y^{-1} \odot ((y + e)\star(x + e))) \\
 = \; & y(y^{-1} \odot (x+e+y+e)) \\
 = \; & y(y^{-1}(x+y+e) + e) \\
 = \; & y y^{-1}(x+y+e) + y(y^{-1}+e) \tag{by \eqref{eqn_semibrace_relation}} \\
 = \; & x+y + y(y^{-1}+e) \\
 = \; & x+y (e + e) \tag{again by \eqref{eqn_semibrace_relation}} \\
 = \; & x+y. 
\end{align*}
Thus the semibrace corresponding to $ (G,\cdot,H,\star,\odot) $ is the original semibrace $ (G,+,\cdot) $. 

Hence we obtain the bijection given in the statement of the theorem.
\end{proof}

\section{Solutions of the set-theoretic Yang--Baxter equation} \label{sec_solutions}
A \textit{solution of the set-theoretic Yang--Baxter equation} on a nonempty set $ G $ (hereafter, simply a \textit{solution} on $ G $) is a map $ r : G \times G \rightarrow G \times G $ such that
\[ (r \times \id)(\id \times r)(r \times \id) = (\id \times r)(r \times \id)(\id \times r) \]
as functions on $ G \times G \times G $. We say that solution is \textit{bijective} if $ r $ is a bijective function, and \textit{involutive} if $ r^{2} = \id $. For $ x,y \in G $ we write
\[ r(x,y) = (\lambda_{x}(y), \rho_{y}(x)); \]
we say that a solution is \textit{left nondegenerate} if $ \lambda_{x} $ is bijective for each $ x \in G $, \textit{right nondegenerate} if $ \rho_{y} $ is bijective for each $ y \in G $, and \textit{nondegenerate} if it is both left and right nondegenerate. 

As stated in Section \ref{sec_introduction}, braces yield bijective nondegenerate solutions: if $ (G,\star,\cdot) $ is a brace and we set 
\begin{equation} \label{soln_skew_brace}
\lambda_{x}(y) = x^{-\star} \star (xy) \mbox{ and } \rho_{y}(x) = \lambda_{x}(y)^{-1}xy 
\end{equation}
then $ r(x,y)=(\lambda_{x}(y), \rho_{y}(x)) $ is such a solution \cite[Theorem 3.1]{GV17}. 

In \cite[Theorem 9]{CCS17} it is shown that if $ (G,+,\cdot) $ is a left semibrace and we set 
\begin{equation} \label{eqn_soln_semibrace}
\calL_{x}(y) = x(x^{-1} + y) \mbox{ and } \calR_{y}(x) = \calL_{x}(y)^{-1}xy
\end{equation}
then $ r(x,y)=(\calL_{x}(y), \calR_{y}(x)) $ is a left nondegenerate solution. The structure of a solution arising in this way is further explored in \cite{CCS20}, using the notion of the \textit{matched product} of solutions \cite[Definition 1 and Theorem 2]{CCS20}. Recall that we have a decomposition $ (G,+) = (H,+) \oplus (E,+) $ where $ H=G+e $ and $ E $ is the set of idempotents with respect to $ + $. The solution $ r(x,y) $ restricts to each of $ H $ and $ E $. In fact, $ (H,+,\cdot) $ is a left skew brace and $ (E,+,\cdot) $ is a trivial left semibrace, and the solution arising from the original semibrace $ (G,+,\cdot) $ is isomorphic to the matched product of the solution arising from $ (H,+,\cdot) $ and the solution arising from $ (E,+,\cdot) $ \cite[Theorem 9 and Theorem 10]{CCS20}. 

Applying the correspondence obtained in Theorem \ref{thm_bracoid_semibrace}, we can obtain a solution from a bracoid containing a brace by passing through the corresponding semibrace. We recall the functions $ \lambda, \rho $ defined in \eqref{eqn_lambda_def} and \eqref{eqn_rho_def}. 

\begin{proposition} \label{prop_bracoid_YBE_1}
Let $ (G,H) $ be a bracoid containing a brace. Then the function $ r : G \times G \rightarrow G \times G $ defined by
\[ r(x,y) = (\rho_{x^{-1}}(y^{-1})^{-1}, \lambda_{y^{-1}}(x^{-1})^{-1}) \]
is a left nondegenerate solution. 
\end{proposition}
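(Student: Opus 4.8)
The plan is to pass through the corresponding semibrace and then recognise the stated map as the solution produced in \cite[Theorem 9]{CCS17}, rewritten purely in terms of the bracoid data. Since $ (G,H) $ contains a brace, Proposition \ref{prop_characterisation} supplies an exact factorization $ G = HS $ with $ S = \Stab_{G}(e) $, so Theorem \ref{thm_bracoid_semibrace} applies and furnishes a left semibrace $ (G,+,\cdot) $ with $ x + y = y\lambda_{y^{-1}}(x) $ for all $ x,y \in G $. By \cite[Theorem 9]{CCS17} this semibrace yields a left nondegenerate solution $ r_{\mathrm{sb}}(x,y) = (\calL_{x}(y), \calR_{y}(x)) $, where $ \calL_{x}(y) = x(x^{-1}+y) $ and $ \calR_{y}(x) = \calL_{x}(y)^{-1}xy $ as in \eqref{eqn_soln_semibrace}. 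It therefore suffices to show that the map $ r $ in the statement coincides with $ r_{\mathrm{sb}} $; left nondegeneracy is then inherited for free.

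Next I would translate $ \calL $ and $ \calR $ into the bracoid functions $ \lambda $ and $ \rho $. Substituting the formula for $ + $ gives $ x^{-1} + y = y\lambda_{y^{-1}}(x^{-1}) $, whence $ \calL_{x}(y) = xy\lambda_{y^{-1}}(x^{-1}) $. Feeding this into the definition of $ \calR $, the factor $ xy $ cancels against its inverse and one obtains $ \calR_{y}(x) = \lambda_{y^{-1}}(x^{-1})^{-1} $, which is exactly the second component of $ r $. For the first component, I would read off from \eqref{eqn_rho_def} that $ \rho_{x^{-1}}(y^{-1}) = \lambda_{y^{-1}}(x^{-1})^{-1}\,y^{-1}x^{-1} $, so inverting gives $ \rho_{x^{-1}}(y^{-1})^{-1} = xy\lambda_{y^{-1}}(x^{-1}) = \calL_{x}(y) $, matching the first component of $ r $. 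Combining these identifications yields $ r = r_{\mathrm{sb}} $, and the result follows.

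I do not expect any genuine obstacle here: the computations are purely mechanical bookkeeping of inverses in $ (G,\cdot) $ together with the definitions \eqref{eqn_rho_def} and \eqref{eqn_soln_semibrace}. The only point requiring care is applying the conventions consistently — the direction of the factorization $ HS $ and the precise definition of $ + $ extracted from the proof of Theorem \ref{thm_bracoid_semibrace} — so that the opaque-looking formula in the statement is correctly recognised as the transported Catino--Colazzo--Stefanelli solution. In this sense the content of the proposition is essentially that the semibrace solution admits an intrinsic description in terms of the bracoid's $ \lambda $ and $ \rho $ functions, with no Yang--Baxter verification needed beyond what \cite{CCS17} already provides.
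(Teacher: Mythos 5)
Your proposal is correct and follows essentially the same route as the paper: pass to the corresponding semibrace via Theorem \ref{thm_bracoid_semibrace}, invoke the left nondegenerate solution of \cite[Theorem 9]{CCS17}, and rewrite $\calL_{x}(y)$ and $\calR_{y}(x)$ in terms of $\lambda$ and $\rho$ using \eqref{eqn_rho_def} to recover the stated formula. Your identifications $\calL_{x}(y) = xy\lambda_{y^{-1}}(x^{-1}) = \rho_{x^{-1}}(y^{-1})^{-1}$ and $\calR_{y}(x) = \lambda_{y^{-1}}(x^{-1})^{-1}$ are exactly the computation in the paper's proof.
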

\begin{proof}
Let $ (G,+,\cdot) $ be the semibrace corresponding to $ (G,H) $ via Theorem \ref{thm_bracoid_semibrace}. Then for $ x,y \in G $ we have $ x+y = y\lambda_{y^{-1}}(x) $. We rewrite the solution arising from the semibrace $ (G,+,\cdot) $ in terms of $ \lambda $ and $ \rho $. 
\begin{align*}
r(x,y)=(\calL_{x}(y), \calR_{y}(x)) = \; & (x(x^{-1} + y), \calL_{x}(y)^{-1}xy) \\
= \; & (xy\lambda_{y^{-1}}(x^{-1}),(xy\lambda_{y^{-1}}(x^{-1}))^{-1}xy) \\
= \; & (\rho_{x^{-1}}(y^{-1})^{-1},\lambda_{y^{-1}}(x^{-1})^{-1}).
\end{align*}
\end{proof}

The large number of inverses appearing in the formula for the solution in Proposition \ref{prop_bracoid_YBE_1} makes it rather unwieldy. By applying some standard techniques we can relate this left nondegenerate solution to a right nondegenerate solution, which more closely resembles the solution obtained from a skew brace in \eqref{soln_skew_brace}. 

\begin{proposition} \label{prop_bracoid_YBE_2}
Let $ (G,H) $ be a bracoid containing a brace. Then the function $ \tilde{r} : G \times G \rightarrow G \times G $ defined by
\[ \tilde{r}(x,y) = (\lambda_{x}(y), \rho_{y}(x)) \]
is a right nondegenerate solution. 
\end{proposition}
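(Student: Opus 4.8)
The plan is to obtain $\tilde{r}$ from the left nondegenerate solution $r$ of Proposition \ref{prop_bracoid_YBE_1} by applying two operations that each send solutions to solutions. Write $\tau : G \times G \rightarrow G \times G$ for the flip $\tau(x,y) = (y,x)$, let $\phi : G \rightarrow G$ be the inversion map $\phi(x) = x^{-1}$ in $(G,\cdot)$, and set $\Phi = \phi \times \phi$, so that $\Phi(x,y) = (x^{-1},y^{-1})$ is an involution. The first operation is relabelling of the underlying set by a bijection: if $r$ is a solution on $G$ then $\Phi r \Phi$ is again a solution, since conjugating the Yang--Baxter equation for $r$ on $G \times G \times G$ by $\phi \times \phi \times \phi$ yields the Yang--Baxter equation for $\Phi r \Phi$. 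The second operation is passage to the opposite solution: if $r$ is a solution then so is $\tau r \tau$. I would justify this by conjugating the Yang--Baxter equation by the reversal $w(x,y,z) = (z,y,x)$ on $G \times G \times G$, using the intertwining identities $w(r \times \id)w = \id \times (\tau r \tau)$ and $w(\id \times r)w = (\tau r \tau) \times \id$, which transform the defining equation for $r$ into the (equivalent, order-reversed) one for $\tau r \tau$. Both of these are standard manipulations and require no bijectivity of $r$.

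Granting these two facts, the core of the argument is the direct verification that
\[ \tilde{r} = \tau\, \Phi\, r\, \Phi\, \tau. \]
Setting $\Psi = \tau \Phi = \Phi \tau$, so that $\Psi(x,y) = (y^{-1},x^{-1})$ and $\Psi^{-1} = \Psi$, this is the single identity $\tilde{r} = \Psi r \Psi$. To check it I would substitute: first $\Psi(x,y) = (y^{-1},x^{-1})$; then apply $r$ via the formula of Proposition \ref{prop_bracoid_YBE_1}, obtaining $r(y^{-1},x^{-1}) = (\rho_{y}(x)^{-1}, \lambda_{x}(y)^{-1})$ after cancelling the double inverses; and finally apply $\Psi$ once more to reach $(\lambda_{x}(y), \rho_{y}(x)) = \tilde{r}(x,y)$. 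Because $\Phi$-conjugation and $\tau$-conjugation each preserve solutions, their composite $\Psi r \Psi = \tilde{r}$ is a solution.

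It then remains to record right nondegeneracy. In the presentation $\tilde{r}(x,y) = (\lambda_{x}(y), \rho_{y}(x))$ the right-hand component, viewed as a function of $x$ for fixed $y$, is exactly $\rho_{y}$, and each $\rho_{y}$ is bijective by Corollary \ref{cor_rho_bijective}; hence $\tilde{r}$ is right nondegenerate, completing the proof.

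The routine ingredients are the two preservation facts (relabelling and opposite) together with the substitution identity. I expect the main obstacle to be bookkeeping rather than conceptual: one must set up the opposite-solution fact carefully (the intertwining identities with $w$) and track the inverses scrupulously through $\tilde{r} = \Psi r \Psi$, since a single misplaced inverse would interchange the roles of $\lambda$ and $\rho$ or swap the two components. No new structural input about the bracoid is needed beyond Proposition \ref{prop_bracoid_YBE_1} and Corollary \ref{cor_rho_bijective}.
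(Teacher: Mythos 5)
Your proposal is correct and takes essentially the same route as the paper: the paper's proof also obtains $\tilde{r}$ by conjugating the solution of Proposition \ref{prop_bracoid_YBE_1} by the flip $\tau$ and the coordinatewise inversion $\iota(x,y)=(x^{-1},y^{-1})$ (your $\Psi = \tau\Phi$ is exactly its $\tau\iota$), verifying the same identity $\tau\iota r\iota\tau(x,y) = (\lambda_x(y),\rho_y(x))$. The only cosmetic differences are that the paper cites the two conjugation-preservation facts as standard (referring to \cite{ESS99}) rather than proving them, and deduces right nondegeneracy from the fact that $\tau$-conjugation converts left into right nondegenerate solutions, whereas you prove the preservation facts and read off right nondegeneracy directly from Corollary \ref{cor_rho_bijective}; both are sound.
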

\begin{proof}
Define maps $ \iota, \tau : G \times G \rightarrow G \times G $ by
\[ \iota(x,y) = (x^{-1},y^{-1}) \mbox{ and } \tau(x,y)=(y,x). \]
It is clear that $ \iota, \tau $ are both self-inverse bijections. It is known (see \cite{ESS99}, for example) that conjugating a left nondegenerate solution by $ \iota $ yields another left nondegenerate solution, and that conjugating a left nondegenerate solution by $ \tau $ yields a right nondegenerate solution. Taking  $ r(x,y) $ as in Proposition \ref{prop_bracoid_YBE_1} we have:
\begin{align*}
\tau \iota r \iota \tau (x,y) = \; & \tau \iota r (y^{-1},x^{-1}) \\
= \; & \tau \iota (\rho_{y}(x)^{-1},\lambda_{x}(y)^{-1}) \\
= \; & (\lambda_{x}(y), \rho_{y}(x)).
\end{align*}
Hence $ \tilde{r}(x,y) $ is a right nondegenerate solution. 
\end{proof}

Since the solution obtained from the bracoid in Proposition \ref{prop_bracoid_YBE_1} is the same as the solution arising from corresponding semibrace, and the solution obtained in Proposition \ref{prop_bracoid_YBE_2} is closely related to it, they each restrict to the subgroups $ H $ and $ S $ of $ G $, and each is isomorphic to a suitable matched product of those restrictions.

\end{document}